\setlist{nosep}
\theoremstyle{definition}
\newtheorem{defin}{Definition}[section]
\theoremstyle{plain}
\newtheorem{theo}[defin]{Theorem}
\newtheorem{lem}[defin]{Lemma}
\newtheorem{pro}[defin]{Proposition}
\newtheorem{cor}[defin]{Corollary}
\theoremstyle{definition}
\newtheorem{exm}[defin]{Example}
\newtheorem{rems}[defin]{Remarks}
\numberwithin{equation}{section}
\newcommand{\D}{\mathfrak{D}}
\renewcommand{\H}{\mathfrak{H}}
\newcommand{\B}{\mathcal{B}}
\newcommand{\n}[1]{\|#1\|}
\newcommand{\nor}{\|\cdot\|}
\renewcommand{\l}{\langle}
\renewcommand{\r}{\rangle}
\newcommand{\N}{\mathbb{N}}
\newcommand{\R}{\mathbb{R}}
\newcommand{\C}{\mathbb{C}}
\newcommand{\pint}{\l\cdot,\cdot\r}
\newcommand{\pin}[2]{\l#1 , #2\r}
\newcommand{\ol}{\overline}
\newcommand{\sub}{\subseteq}
\newcommand{\mez}{\frac{1}{2}}
\renewcommand{\t}{\mathfrak{t}}
\newcommand{\s}{\mathfrak{s}}
\newcommand{\w}{\mathfrak{w}}
\newcommand{\q}{\mathfrak{q}}
\newcommand{\p}{\mathfrak{p}}
\newcommand{\RR}{\Sigma}
\newcommand{\af}{\mathfrak{a}}
\newcommand{\bb}{\mathfrak{b}}
\renewcommand\labelenumi{\emph{(\roman{enumi})}}
\renewcommand\theenumi\labelenumi
\fillast \fontsize{12}{15}\scshape}{\thesection.}{0.8em}{}
\fillast \fontsize{11}{12}\scshape}{\thesubsection.}{0.8em}{}
\begin{document}

\thispagestyle{plain}
\begin{center}
	\large
	{\uppercase{\bf A Lebesgue-type decomposition \\ on one side for sesquilinear forms}} \\
	\vspace*{0.5cm}
	{\scshape{Rosario Corso}}
\end{center}

\normalsize 
\vspace*{1cm}	

\small 

\begin{minipage}{11.8cm}
	{\scshape Abstract.} 
Sesquilinear forms which are not necessarily positive may have a different behavior, with respect to a positive form, on each side. For this reason a Lebesgue-type decomposition on one side is provided for generic forms satisfying a boundedness condition. 
\end{minipage}

\vspace*{.5cm}

\begin{minipage}{11.8cm}
	{\scshape Keywords:} sesquilinear forms, Lebesgue decomposition, regularity, singularity, complex measures, bounded operators.
\end{minipage}

\vspace*{.5cm}

\begin{minipage}{11.8cm}
	{\scshape MSC (2010):} 47A07, 15A63, 28A12, 47A12.
\end{minipage}

\vspace*{1cm}
\normalsize

\section{Introduction}

A sesquilinear form $\t$ on a complex vector space $\D$ is a map $\t:\D\times \D\to \C$ which is linear in the first component and anti-linear in the second one. For brevity, we write $\t[f]:=\t(f,f)$ if $f\in \D$. We call $\t$ {\it non-negative} if $\t[f]\geq 0$ for all $f\in \D$ (in symbols, $\t\geq 0$). Given two non-negative forms $\s,\w$ on $\D$ there is a special decomposition of $\s=\s_a+\s_s$ with respect to $\w$, called {\it Lebesgue decomposition} (that goes back to \cite{Simon} and then to \cite{Kos,HSdeS,Gheondea,STT}). Here $\s_a,\s_s$ are non-negative forms on $\D$ which are $\w$-absolutely continuous and $\w$-singular, respectively, according to the following definitions. A non-negative form $\mathfrak{u}$ is 
\begin{itemize}
	\item {\it $\w$-absolutely continuous} (in symbols, $\mathfrak{u}\ll \w$), i.e.\ if $\w[f_n]\to 0$ and $\mathfrak{u}[f_n-f_m]\to 0$ then $\mathfrak{u}[f_n]\to 0$;
	\item {\it $\w$-singular} (in symbols, $\mathfrak{u}\perp \w$), i.e. for all $f\in \D$ there exists $\{f_n\}_{n\in \N}\subset \D$ such that $\w[f_n]\to 0$ and $\mathfrak{u}[f-f_n]\to 0$ or, equivalently, the only non-negative form $\p$ such that $\p\leq \w$ and $\p\leq \mathfrak{u}$ is the null form.  
\end{itemize}
Moreover the forms $\s_a$ and $\s_s$ are singular with respect to each other.

In \cite{Corso_Leb} the assumption of non-negativity for the first form was removed. In other words, a Lebesgue-type decomposition of a sesquilinear form $\t$ (satisfying a boundedness condition) on a complex vector space $\D$ with respect to a non-negative form $\w$ on $\D$ was proved. To explain this decomposition let us recall some notions. We denote by $M(\t)$ the set of non-negative form $\s$ on $\D$ such that $|\t(f,g)|\leq \s[f]^\mez \s[g]^\mez$ for all $f,g\in \D$. Let $\af,\bb$ be non-negative forms on $\D$ with $\af \ll\w$ and $\bb \perp \w$. Consider a form  $\p$ on $\D$ such that for some $\alpha_1,\alpha_2,\alpha_3,\alpha_4\geq 0$ 
\begin{equation}
\label{def_intro}
|\p(f,g)|\leq \alpha_1\af[f]^\mez \af[g]^\mez+\alpha_2\af[f]^\mez \bb[g]^\mez+\alpha_3\bb[f]^\mez \af[g]^\mez+\alpha_4\bb[f]^\mez \bb[g]^\mez, 
\end{equation}
for all $f,g\in \D$. Then a form $\p$ is called {\it $\w$-regular} if \eqref{def_intro} holds with $\alpha_2=\alpha_3=\alpha_4=0$; {\it $\w$-mixed} if \eqref{def_intro} holds with $\alpha_1=\alpha_4=0$; {\it $\w$-strongly singular} if \eqref{def_intro} holds  $\alpha_1=\alpha_2=\alpha_3=0$. \\
The Lebesgue decomposition of a sesquilinear form $\t$ in \cite{Corso_Leb} states that if $M(\t)\neq 0$ then $\t$ is a sum of a $\w$-regular, a $\w$-mixed and a $\w$-strongly singular form on $\D$. Moreover, this three-terms decomposition is minimal: there exist forms that are not the sum of just two of the three types of forms.  

In this paper we provide a new version of Lebesgue decomposition with only two terms (like the classical one for non-negative forms). The new notions that it involves are motivated by the following simple example. 
Let us consider a Hilbert space $\H$ with inner product $\pint$ and $x,y\in \H$ such that $\pin{x}{y}=0$. Define $\w(f,g)=\pin{f}{x}\pin{x}{g}$ and $\t(f,g)=\pin{f}{x}\pin{y}{g}$ for all $f,g\in \H$.  
It is easy to see that the form $\t$ is not $\w$-regular. Nevertheless, we note a `good' behavior of $\t$ with respect to $\w$ on the first component and a `bad' behavior on the second side, in the sense that $|\t(f,g)|\leq \w[f]^\mez \q[g]^\mez$ for all $f,g \in \H$, where $\q(f,g)=\pin{f}{y}\pin{y}{g}$, which is a $\w$-singular non-negative form. 
In particular, for all $g\in \H$ there exists a sequence $\{g_n\}_{n\in \N}$ in $\H$ such that $\w[g_n]\to 0$ and $\t(f,g_n-g)\to 0$.

Taking into account this very simple example, we now introduce the two basic notions of this paper. Let $\t$ be a sesquilinear form  on a complex vector space $\D$. We denote by $M_l(\t)$ the set of forms $\s_1\geq 0$ such that there exists a form $\s_2 \geq 0$ on $\D$ and 
\begin{equation}
\label{dis_intro}
|\t(f,g)|\leq \s_1[f]^\mez \s_2[g]^\mez \qquad \forall f,g \in \D.
\end{equation}
Then $\t$ is called {\it $\w$-left regular} if there exists $\s_1\in M_l(\t)$ such that $\s_1\ll \w$, and {\it $\w$-left strongly singular} if there exists $\s_1\in M_l(\t)$ such that $\s_1\perp \w$. 
Since the adjoint operation $\t \to \t^*$ ($\t^*(f,g)=\ol{\t(g,f)}$) is an involution, we do not introduce the correspondent (and equivalent) definitions on the right side. The presence of a regularity on one side and not in the other one is strictly related to the fact that we deal with generic forms ($\t \ngeq 0$, in general). 

Now we have everything we need to present our main result (Theorem \ref{th_Leb}): a form $\t$ such that $M_l(\t)\neq \varnothing$ is the sum of a  $\w$-left regular form and a $\w$-strongly singular one. 

We analyze the case where the space has finite dimension and take use of this setting to show that the Lebesgue left decomposition is not unique. Moreover, the notions introduced before are connected to simple relations between kernels.

As applications we consider forms induced by measures and give a Lebesgue-type decomposition of bounded operators extending that of Ando \cite{Ando} for non-negative operators. 

Further applications of this paper might also concern extensions of Lebesgue-type decompositions that have been given in other contexts (see 
\cite{Ando_Szy,Gudder,Inoue,Kosaki,Szymanski,Tarcsay}).

\section{Main result}

Before proving the new Lebesgue-type decomposition we recall some notions and fix the notation. 
For a Hilbert space $\H$ we denote by $\B(\H)$ the set of bounded operator everywhere defined on $\H$. We refer to \cite{Kato} for an introduction to sesquilinear forms.

Let $\t$ be a sesquilinear form on $\D$. We put $\ker(\t):=\{f\in \D: \t(f,g)=0, \forall g \in \D\}$, that is a subspace of $\D$.  When $\t$ is non-negative one has $\ker(\t)=\{f\in \D: \t[f]=0\}$ and the following inner product is defined in the quotient  $\D\backslash \ker(\t)$ 
$$
\pin{\pi_\t (f)}{\pi_\t (g)}_\t :=\t(f,g), \qquad f,g\in \D,
$$
where $\pi_\t:\D \to \D\backslash \ker(\t)$ is the canonical projection. 
We denote by $\H_\t$ the completion of $\D\backslash \ker(\t)$ with this inner product. 
Linear combinations of two sesquilinear forms are defined in a natural way.

Now we give more interesting examples of $\w$-left regular and $\w$-left strongly singular forms.

\begin{exm}
	\begin{enumerate}
		\item[(i)] Let $A,B$ be two operators on a Hilbert space $\H$ defined on a subspace $\D$. Then the form $\t(f,g)=\pin{Af}{Bg}$, for $f,g\in \D$, is left regular with respect to the inner product of $\H$ if $A$ is closable. If $A$ has dense kernel, then $\t$ is left strongly singular by \cite[Remark 5.3]{Kos}.
		\item[(ii)]  Let $\H_- \supset \H \supset \H_+$ be a rigged Hilbert space with duality $\pint$ between $\H_-$ and $\H_+$. For $\omega,\varrho\in \H_-$ define the sesquilinear forms
		$$
		\t(\xi,\eta)=\pin{\omega}{\xi}\ol{\pin{\varrho}{\eta}} \;\;\text{ and }\;\; \w(\xi,\eta)=\pin{\xi}{\eta}, \qquad \xi,\eta \in \H_+.
		$$
		Applying the results in  \cite[Examples 1.15, 5.5, 5.9]{Kos}, we find that
		 if $\omega\in \H$, then $\t$ is $\w$-left regular; if $\omega\in\H_-\backslash\H$,  then $\t$ is $\w$-left strongly singular.
		 \item[(iii)] 
		 Let $\Omega\subseteq \R^d$ be an open set, $C^\infty(\Omega)$ the space of infinitely continuously differentiable complex functions on $\Omega$, $\D=C_0^\infty(\Omega)$ the subspace of $C^\infty(\Omega)$ consisting of functions with compact support. We will follow other standard notations: for every multi-index $\alpha\in \N^d$,  $|\alpha|:=\alpha_1+\dots +\alpha_d$ and $\partial^\alpha f :=\frac{\partial^{\alpha_1}}{\partial x_1}\dots \frac{\partial^{\alpha_d}}{\partial x_d}f$. \\
		 Let $n\geq 1$ and, for all $|\alpha|,|\beta|\leq n$, let $r_{\alpha, \beta}\in C^\infty(\Omega)$.  Define the sesquilinear forms for $f,g\in\D$
		 $$
		 \w(f,g)=\int_\Omega f(x)\overline{g(x)}dx, 
		 $$
		 $$
		 \t(f,g)= \sum_{|\alpha|,|\beta|\leq n} \int_\Omega r_{\alpha, \beta}(x)\partial^\alpha f(x) \ol{ \partial^\beta g(x)}dx.
		 $$
		 We denote by $\nor$ the norm of the classical space $L^2(\R)$.  We have 
		 \begin{align*}
		 |\t(f,g)|&\leq \sum_{|\beta|\leq n}  \n{\sum_{|\alpha|\leq n} r_{\alpha,\beta} \partial^\alpha f}\n{\partial^\beta g}\\
		 &\leq N(\sum_{|\beta|\leq n}  \n{\sum_{|\alpha|\leq n} r_{\alpha,\beta} \partial^\alpha f}^2)^\mez (\sum_{|\beta|\leq n}\n{\partial^\beta g}^2)^\mez\\
		 &\leq N\s[f]^\mez \s[g]^\mez
		 \end{align*}
		 for some $N\in \N$ and $\s_1[f]=\sum_{|\beta|\leq n}  \n{\sum_{|\alpha|\leq n} r_{\alpha,\beta} \partial^\alpha f}^2+\sum_{|\beta|\leq n}\n{\partial^\beta f}^2$. By \cite[Lemma 1.13]{Schm}, the operators $\sum_{|\alpha|\leq n} r_{\alpha,\beta} \partial^\alpha$ and $\partial^\beta$ are closable on $\D$. Therefore the non-negative sesquilinear form $\s$ is $\w$-absolutely continuous. It follows that $\t$ is $\w$-regular (in particular $\w$-left regular).
	\end{enumerate}
\end{exm}

\begin{rems}
	\label{rem_intro}
	\begin{enumerate}
		\item[(i)] For a $\w$-left regular form $\t$ we have $\ker(\w)\subseteq \ker(\t)$. This follows from the fact that $\ker(\w)\subseteq \ker(\s)$ for any non-negative $\w$-absolutely continuous form $\s$.
		\item[(ii)] A sesquilinear form $\t$ on $\D$ such that $C\w\in M_l(\t)$, for some $C>0$, is called $\w${\it -left bounded}. Clearly, it is also $\w$-left regular.
		\item[(iii)] For a $\w$-left strongly singular form $\t$ on $\D$ and $f\in \D$ there exists a sequence $\{f_n\}_{n\in \N}\subset \D$ such that $\w[f_n]\to 0$ and $\t(f-f_n,g)\to 0$ for every $g\in \D$. 
	\end{enumerate}
\end{rems}

In the proof of Theorem \ref{th_Leb} we will need the following characterization of singular forms.

\begin{lem}[{\cite[Theorem 6.1]{Kos}}]
	\label{H_s_w_sing}
	A non-negative sesquilinear form $\s$ is $\w$-singular if and only if the map $\pi_{\s+\w}(f)\to (\pi_\s(f),\pi_\w(f))$ is well-defined from $\D\backslash \ker(\s+\w)$ to $(\D\backslash \ker(\s))\times (\D\backslash \ker(\w))$ and extends to an isomorphism between $\H_{\s+\w}$ and the direct product of $\H_\s$ and $\H_\w$
	($\H_{\s+\w}\simeq\H_\s \times \H_\w$).
\end{lem}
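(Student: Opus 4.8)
The plan is to write the claimed map down explicitly, notice that it is automatically isometric, and then observe that the only substantive point is its surjectivity, which turns out to be literally the definition of $\w$-singularity.

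First I would record the elementary fact $\ker(\s+\w)=\ker(\s)\cap\ker(\w)$: if $(\s+\w)[f]=0$ then $\s[f]=\w[f]=0$ since both forms are non-negative, and Cauchy--Schwarz for $\s$ and for $\w$ then gives $f\in\ker(\s)\cap\ker(\w)$, the reverse inclusion being trivial. Hence, if $\pi_{\s+\w}(f)=\pi_{\s+\w}(g)$ then $f-g\in\ker(\s)\cap\ker(\w)$, so $\pi_\s(f)=\pi_\s(g)$ and $\pi_\w(f)=\pi_\w(g)$: the map $V\colon\pi_{\s+\w}(f)\mapsto(\pi_\s(f),\pi_\w(f))$ is a well-defined linear map from $\D\backslash\ker(\s+\w)$ into $(\D\backslash\ker(\s))\times(\D\backslash\ker(\w))$, with no hypothesis on $\s,\w$ beyond non-negativity. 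Equipping $\H_\s\times\H_\w$ with the Hilbert-space norm of the direct product, the identity $(\s+\w)[f]=\s[f]+\w[f]$ says exactly that $\|\pi_{\s+\w}(f)\|_{\s+\w}^2=\|\pi_\s(f)\|_\s^2+\|\pi_\w(f)\|_\w^2$, i.e. $V$ is isometric. Since $\D\backslash\ker(\s+\w)$ is dense in $\H_{\s+\w}$, $V$ extends to an isometry $\overline V\colon\H_{\s+\w}\to\H_\s\times\H_\w$ whose range, $\H_{\s+\w}$ being complete, is the closed subspace $\overline{\mathrm{ran}\,V}$. Therefore $\overline V$ is an isomorphism if and only if $\mathrm{ran}\,V$ is dense in $\H_\s\times\H_\w$ (and in that case $\overline V$ is automatically unitary, so nothing else needs to be checked).

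Next I would reduce this density to a statement about individual vectors. The vectors $(\pi_\s(f),0)$ and $(0,\pi_\w(g))$, $f,g\in\D$, span $(\D\backslash\ker(\s))\times(\D\backslash\ker(\w))$, which is dense in $\H_\s\times\H_\w$. The closed subspace $\overline{\mathrm{ran}\,V}$ already contains $(\pi_\s(f),\pi_\w(f))$ for every $f\in\D$; hence it is all of $\H_\s\times\H_\w$ if and only if it also contains every $(\pi_\s(f),0)$, because subtracting $(\pi_\s(f),0)$ from $(\pi_\s(f),\pi_\w(f))$ produces $(0,\pi_\w(f))$, and thus all of the spanning vectors. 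Finally, $(\pi_\s(f),0)\in\overline{\mathrm{ran}\,V}$ means precisely that there is a sequence $\{f_n\}\subset\D$ with $\pi_\s(f_n)\to\pi_\s(f)$ in $\H_\s$ and $\pi_\w(f_n)\to 0$ in $\H_\w$, that is, $\s[f_n-f]\to 0$ and $\w[f_n]\to 0$; requiring this for every $f\in\D$ is exactly the definition of $\s\perp\w$. Chaining the equivalences yields the lemma.

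I do not expect a genuine obstacle here; the argument is essentially bookkeeping. The one point worth stating carefully is that ``isomorphism'' in the conclusion is not an additional condition on norms: $V$ is isometric from the outset, so the whole content is surjectivity, and surjectivity $\Leftrightarrow$ density of $\mathrm{ran}\,V$ $\Leftrightarrow$ $\w$-singularity of $\s$. (It is also worth keeping in mind that the relevant notion of $\w$-singular is the sequential one; its equivalence with the ``no non-trivial common minorant'' formulation is not needed for this proof.)
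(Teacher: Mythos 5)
Your proof is correct and complete. Note, however, that the paper does not prove this lemma at all: it is quoted verbatim from Koshmanenko \cite[Theorem~6.1]{Kos}, so there is no in-paper argument to compare against. Your argument is the standard one — well-definedness and isometry of $\pi_{\s+\w}(f)\mapsto(\pi_\s(f),\pi_\w(f))$ are automatic from $\ker(\s+\w)=\ker(\s)\cap\ker(\w)$ and $(\s+\w)[f]=\s[f]+\w[f]$, so the whole content is density of the range, which you correctly identify with the sequential definition of $\s\perp\w$ used in the introduction — and I see no gaps.
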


\begin{theo}
	\label{th_Leb}
	Let $\t,\w$ be sesquilinear forms on $\D$ such that $M_l(\t)\neq \varnothing$ and $\w\geq 0$. Then $\t=\t_{lr}+\t_{ls}$ where $\t_{lr}$ is a $\w$-left regular form and $\t_{ls}$ is a $\w$-left strongly singular form on $\D$. 
\end{theo}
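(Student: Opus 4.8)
The plan is to combine the classical Lebesgue decomposition of a non-negative form with a standard operator representation of $\t$, and then to split that operator along the decomposition of its left space. Fix $\s_1\in M_l(\t)$ together with a non-negative form $\s_2$ on $\D$ realising $|\t(f,g)|\leq \s_1[f]^\mez\s_2[g]^\mez$ for all $f,g\in\D$. Since $\s_1\geq 0$ and $\w\geq 0$, the classical Lebesgue decomposition for non-negative forms (recalled in the Introduction) produces non-negative forms $\af,\bb$ on $\D$ with $\s_1=\af+\bb$, $\af\ll\w$, $\bb\perp\w$, and moreover $\af\perp\bb$.

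First I would pass to Hilbert space. The defining inequality shows that the sesquilinear map $(\pi_{\s_1}(f),\pi_{\s_2}(g))\mapsto\t(f,g)$ is well defined on $(\D\backslash\ker(\s_1))\times(\D\backslash\ker(\s_2))$ and bounded by $1$ with respect to the $\H_{\s_1}$- and $\H_{\s_2}$-norms; hence it extends to a bounded sesquilinear form on $\H_{\s_1}\times\H_{\s_2}$, and there is $T\in\B(\H_{\s_1},\H_{\s_2})$ with $\n{T}\leq 1$ and
$$
\t(f,g)=\pin{T\pi_{\s_1}(f)}{\pi_{\s_2}(g)}_{\s_2},\qquad f,g\in\D .
$$
Since $\af\perp\bb$, Lemma~\ref{H_s_w_sing} applied to $\af$ and $\bb$ identifies $\H_{\s_1}=\H_{\af+\bb}$ with $\H_\af\times\H_\bb$, the element $\pi_{\s_1}(f)$ corresponding to $(\pi_\af(f),\pi_\bb(f))$; from $\s_1[f]=\af[f]+\bb[f]$ this identification is isometric, so $\H_\af$ and $\H_\bb$ sit as orthogonal summands of $\H_{\s_1}$ with $\pi_{\s_1}(f)=\pi_\af(f)+\pi_\bb(f)$. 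Setting $T_a:=T|_{\H_\af}$ and $T_b:=T|_{\H_\bb}$, both are bounded with norm $\leq\n{T}\leq 1$ and $T\pi_{\s_1}(f)=T_a\pi_\af(f)+T_b\pi_\bb(f)$.

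It then remains to define
$$
\t_{lr}(f,g):=\pin{T_a\pi_\af(f)}{\pi_{\s_2}(g)}_{\s_2},\qquad \t_{ls}(f,g):=\pin{T_b\pi_\bb(f)}{\pi_{\s_2}(g)}_{\s_2},
$$
which are sesquilinear forms on $\D$ with $\t=\t_{lr}+\t_{ls}$. For $\t_{lr}$, Cauchy--Schwarz together with $\n{T_a}\leq 1$ and $\n{\pi_\af(f)}_\af=\af[f]^\mez$ gives $|\t_{lr}(f,g)|\leq\af[f]^\mez\s_2[g]^\mez$, so $\af\in M_l(\t_{lr})$ with $\af\ll\w$, i.e.\ $\t_{lr}$ is $\w$-left regular. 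Symmetrically $|\t_{ls}(f,g)|\leq\bb[f]^\mez\s_2[g]^\mez$, so $\bb\in M_l(\t_{ls})$ with $\bb\perp\w$, i.e.\ $\t_{ls}$ is $\w$-left strongly singular.

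Beyond routine bookkeeping — well-definedness of the quotient maps and of $T$, and the additivity $T\pi_{\s_1}=T_a\pi_\af+T_b\pi_\bb$ — the only genuine input is the orthogonal splitting $\H_{\s_1}\simeq\H_\af\times\H_\bb$, which marries the classical Lebesgue decomposition of $\s_1$ (responsible for $\af\perp\bb$) with Lemma~\ref{H_s_w_sing}. I expect making this identification precise and compatible with the action of $T$ to be the main point to handle carefully; everything else is a direct computation.
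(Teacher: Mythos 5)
Your proof is correct and follows essentially the same route as the paper: decompose $\s_1=\af+\bb$ by the classical Lebesgue decomposition, represent $\t$ by a contraction $T:\H_{\s_1}\to\H_{\s_2}$, and split $T$ along the orthogonal identification $\H_{\s_1}\simeq\H_\af\times\H_\bb$ furnished by Lemma~\ref{H_s_w_sing}. The only cosmetic difference is that the paper phrases the splitting via the projection $P\in\B(\H_{\s_1})$ onto the $\H_\af$-summand (so that $\t_{lr}(f,g)=\pin{TP\pi_{\s_1}(f)}{\pi_{\s_2}(g)}_{\s_2}$), which is exactly your $T_a\pi_\af(f)$.
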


The proof is similar to the one of \cite{Corso_Leb}. Thus we give only a sketch of it.

\begin{proof}
	Let $\s,\s_2$ be non-negative forms on $\D$ such that $|\t(f,g)|\leq \s[f]^\mez \s_2[g]^\mez$ for all $f,g \in \D$. Let $\s=\s_a+\s_s$ be the classical Lebesgue decomposition of $\s$ with respect to $\w$. 	The forms $\s_a$ and $\s_s$ are singular with respect to each other. By Lemma \ref{H_s_w_sing}, there exists a projection $P\in \B(\H_\s)$ such that 	
	\begin{align*}
	\s_a(f,g)=\pin{P\pi_\s(f)}{P\pi_\s(g)}_\s \; \text{ and }\;
	\s_s(f,g)=\pin{(I-P)\pi_\s(f)}{(I-P)\pi_\s(g)}_\s \nonumber.
	\end{align*}
	Since $\t$ induces a bounded sesquilinear form on $\H_\s\times \H_{\s_2}$ (here $\H_\s$ may be different to $\H_{\s_2}$) there exists a unique bounded operator $T:\H_s\to \H_{\s_2}$, with norm $\n{T}\leq 1$ and
	\begin{equation*}
	\label{eq_t_T}
	\t(f,g)=\pin{T\pi_\s(f)}{\pi_{\s_2}(g)}_{\s_2}, \qquad \forall f,g \in \D.
	\end{equation*}
	Now define  
	\begin{align*}
	\label{t_lr} \t_{lr}(f,g):=\pin{TP\pi_\s(f)}{\pi_{\s_2}(g)}_{\s_2} \;\;\text{ and }\;\;
	\t_{ls}(f,g):=\pin{T(I-P)\pi_\s(f)}{\pi_{\s_2}(g)}_{\s_2} \nonumber
	\end{align*}
	for all $f,g \in \D$. By construction, $\t=\t_{lr}+\t_{ls}$. It is easy to see that $\t_{lr}$ is $\w$-left regular and $\t_{ls}$ is $\w$-left strongly singular. Indeed, by Cauchy-Schwarz inequality, we have for all $f,g\in \D$
	\begin{equation}
	\label{bounds_t_r,s}
	|\t_{lr}(f,g)|\leq \s_a[f]^\mez \s[g]^\mez, \qquad
	|\t_{ls}(f,g)|\leq \s_s[f]^\mez \s[g]^\mez. \qedhere
	\end{equation}
\end{proof} 

A decomposition of $\t$ in $\w$-left regular and $\w$-strongly singular parts like Theorem \ref{th_Leb} is called a {\it Lebesgue left decomposition}. 

We have already said that the adjoint $\t^*$ of $\t$ has corresponding properties on the right side. There are other two classical forms associated to $\t$: the real part $\Re\t=\frac{1}{2}(t+t^*)$ and the imaginary part $\Im\t=\frac{1}{2i}(t-t^*)$. 
In general, however there is no good relations between these notions and the real and imaginary parts of $\t$ as in \cite{Corso_Leb}. We can say that if $\Re\t$ and $\Im \t$ are $\w$-left regular, then so $\t$ is. The converse is not true, $\t$ may be $\w$-left regular but both $\Re\t$ and $\Im \t$ may be not $\w$-left regular.

\section{Examples and applications}

\subsection{Measures}

The Lebesgue decomposition of forms is inspired by the celebrated Lebesgue decomposition of measure. In this subsection we want to give the relations. 

Let $\RR$ be a $\sigma$-algebra on a non-empty set $\mathcal{A}$. We write $\D$ for the complex vector space of simple functions on $(\mathcal{A},\RR)$. We recall some basic notions of measure theory (see for instance \cite{Halmos_m,Rudin}). Let $\mu$ be a (complex) measure on $(\mathcal{A},\RR)$. We said that $\mu$ is {\it non-negative} if $\mu(A)\geq 0$ for all $A\in \RR$.

Given two measures $\mu,\nu$ on $(\mathcal{A},\RR)$ with $\nu$ non-negative, $\mu$ is {\it $\nu$-absolutely continuous}  (in symbol $\mu \ll \nu$) if $\nu(A)=0$ implies $\mu(A)=0$. 
On the other hand, $\mu$ is {\it $\nu$-singular}  (in symbol $\mu \perp \nu$) if there exists $E\in \RR$ such that $\nu(A)=\nu(A\cap E)$ and $\mu(A)=\mu(A\cap E^c)$.

Let $\mu,\nu$ be two measures on $(\mathcal{A},\RR)$ with $\nu$ non-negative. These measures define sesquilinear forms on $\D$ in a natural fashion, i.e.
\begin{equation*}
\label{form_meas}
\t(\phi,\psi)=\int_\mathcal{A} \phi \ol{\psi}d\mu, \qquad
\w(\phi,\psi)=\int_\mathcal{A} \phi \ol{\psi}d\nu.
\end{equation*}
We say that $\t$ is the sesquilinear form {\it induced} by $\mu$. 

The set $M_l(\t)$ is not empty because it contains the form induced by the total variation of $\mu$. 
Moreover, $\t$ is non-negative if and only if $\mu$ is non-negative. In this case, the Lebesgue decompositions of $\t$ and $\mu$ with respect to $\w$ and $\nu$, respectively, are in correspondence. In particular, $\t\ll \w$ if and only if $\mu\ll\nu$; $\t \perp \w$ if and only if $\mu\perp\nu$.

In \cite{Corso_Leb} it was proved that if $\mu$ is $\nu$-singular, then $\t$ is $\w$-strongly singular (therefore $\w$-left strongly singular). In the same way if $\mu$ is $\nu$-absolutely continuous, then $\t$ is $\w$-regular and, in particular, $\w$-left regular. \\
Now we prove the converse of the last statement. If $\t$ is $\w$-left regular then for $A\in \RR$ such that $\nu(A)=0$, we have $\chi_A\in \ker \w \sub \ker \t$ ($\chi_A$ is the characteristic function on $A$). Therefore, $\mu(A)=0$ and $\mu$ is $\nu$-absolutely continuous.

\subsection{Finite dimension case}

It may be interesting to see our setting in the case $\D=\C^n$ for some $n\in \N$. The notions of $\w$-left regular and $\w$-left strongly singular forms turn to be simplified. Anyway the Lebesgue left decomposition is not unique even in this situation. 
As well-known, every form on $\D$ is bounded by the standard norm $\nor$ of $\C^n$ and another advantage is that we can represent a form on $\C^n$ by a $n\times n$ matrix.

\begin{pro}
	\label{lem_dim_finite}
	Let $\t,\w$ be forms on $\C^n$ with $\w\geq 0$.  
	\begin{enumerate}
		\item The following statements are equivalent. 
		\begin{enumerate}
			\item[\emph{(a)}] $\t$ is $\w$-left bounded;
			\item[\emph{(b)}] $\t$ is $\w$-left regular;
			\item[\emph{(c)}] $\ker(\w)\subseteq \ker(\t)$. 
		\end{enumerate}
		\item $\t$ is $\w$-left strongly singular if and only if sum $\ker(\w)+ \ker(\t)=\C^n$ holds.
	\end{enumerate}
\end{pro}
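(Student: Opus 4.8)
The plan rests on two finite-dimensional simplifications that turn both items into bookkeeping with kernels. First, represent $\t$ by the matrix $M\in\B(\C^n)$ with $\t(f,g)=\pin{Mf}{g}$ for all $f,g\in\C^n$, where $\pint$ is the standard inner product; then $\ker(\t)=\ker(M)$, and by the Cauchy--Schwarz inequality $|\t(f,g)|\le\n{Mf}\,\n{g}$, so the non-negative form $\q$ given by $\q[f]:=\n{Mf}^2=\pin{M^*Mf}{f}$ lies in $M_l(\t)$ (paired with $\s_2=\nor^2$) and satisfies $\ker(\q)=\ker(M)=\ker(\t)$. Conversely, for \emph{any} $\s_1\in M_l(\t)$ the inequality \eqref{dis_intro} forces $\ker(\s_1)\subseteq\ker(\t)$. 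Second, I would record the elementary fact that a non-negative form $\s$ on $\C^n$ is $\w$-absolutely continuous iff $\ker(\w)\subseteq\ker(\s)$, and $\w$-singular iff $\ker(\w)+\ker(\s)=\C^n$: for the first, the nontrivial ``if'' follows by writing $f=k+h$ with $k\in\ker(\w)$, $h\perp\ker(\w)$ and noting that $\w[f_n]\to0$ forces $h_n\to0$ (since $\w$ is positive definite on $\ker(\w)^\perp$), hence $\s[f_n]=\s[h_n]\to0$; for the second, ``if'' is witnessed by the constant sequence $f_n=k$ when $f=k+m$ with $k\in\ker(\w)$, $m\in\ker(\s)$, while ``only if'' follows because $\s[f-f_n]\to0$ places $\pi_\s(f)$ in the closure of the (finite-dimensional, hence closed) subspace $\pi_\s(\ker(\w))$, i.e.\ $f\in\ker(\w)+\ker(\s)$.

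With these in hand, item (1) is immediate: $(a)\Rightarrow(b)$ is Remarks~\ref{rem_intro}(ii); $(b)\Rightarrow(c)$ because a witness $\s_1\in M_l(\t)$ with $\s_1\ll\w$ gives $\ker(\w)\subseteq\ker(\s_1)\subseteq\ker(\t)$ by Remarks~\ref{rem_intro}(i) and the previous paragraph; and for $(c)\Rightarrow(a)$, assuming $\ker(\w)\subseteq\ker(\t)=\ker(\q)$ I would dominate $\q$ by a multiple of $\w$ --- splitting off $\ker(\w)$ and using that $\w$ is bounded below on $\ker(\w)^\perp$ by its least positive eigenvalue yields $\q\le C\w$ for some $C\ge 0$ --- and then, since $M_l(\t)$ is upward closed, conclude $C\w\in M_l(\t)$, i.e.\ $\t$ is $\w$-left bounded. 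For item (2): $\t$ is $\w$-left strongly singular iff some $\s_1\in M_l(\t)$ satisfies $\ker(\w)+\ker(\s_1)=\C^n$; the forward direction follows from $\ker(\s_1)\subseteq\ker(\t)$, and the backward direction from taking $\s_1=\q$, for which $\ker(\w)+\ker(\q)=\ker(\w)+\ker(\t)=\C^n$ by hypothesis.

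I expect the only genuine step to be the implication $(c)\Rightarrow(a)$, namely the quantitative domination $\q\le C\w$ of a non-negative form vanishing on $\ker(\w)$; everything else reduces to linear algebra once the matrix representation of $\t$ and the kernel descriptions of $\ll$ and $\perp$ are in place.
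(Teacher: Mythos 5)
Your proof is correct, and it reaches the kernel characterizations by a route that is genuinely more self-contained than the paper's. The paper handles item (2) by invoking Lemma~\ref{H_s_w_sing} (the isomorphism $\H_{\s+\w}\simeq\H_\s\times\H_\w$) and then counting dimensions, $n-p=2n-l-m$, to translate $\w$-singularity into $\ker(\w)+\ker(\s)=\C^n$; you instead prove the equivalences ``$\s\ll\w\iff\ker(\w)\subseteq\ker(\s)$'' and ``$\s\perp\w\iff\ker(\w)+\ker(\s)=\C^n$'' directly from the sequential definitions, using only that $\w$ is bounded below on $\ker(\w)^\perp$. Your canonical witness $\q[f]=\n{Mf}^2$ plays exactly the role of the paper's $\s(f,g)=\pin{Pf}{Pg}$ with $P$ the projection onto $\ker(\t)^\perp$ (both have kernel $\ker(\t)$ and are mutually dominated), and your domination $\q\le C\w$ for $(c)\Rightarrow(a)$ is the same estimate the paper packages as boundedness of the induced form on the quotient $\C^n\backslash\ker(\w)$. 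The trade-off: the paper's dimension count is shorter once Lemma~\ref{H_s_w_sing} is accepted, while your argument avoids that lemma entirely and makes the finite-dimensional mechanism transparent. One step you should tighten: in the ``only if'' direction of your singularity criterion, $\w[f_n]\to0$ does not put $f_n$ in $\ker(\w)$; you must first replace $f_n$ by its component $k_n\in\ker(\w)$, noting that $f_n-k_n\to0$ (again because $\w$ is bounded below on $\ker(\w)^\perp$), hence $\s[f_n-k_n]\to0$ and only then does $\pi_\s(k_n)\to\pi_\s(f)$ place $\pi_\s(f)$ in the closed subspace $\pi_\s(\ker(\w))$. This is a two-line repair using an estimate you already have, so it does not affect the validity of the overall argument.
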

\begin{proof}
	\begin{enumerate}
		\item[(i)]  The implications (a)$\implies$(b)$\implies$(c) hold by Remarks \ref{rem_intro}. Now we prove (c)$\implies$(a). Thus, denoting by $\pi_\w$ the canonical projection $\C^n\to \C^n\backslash \ker(\w)$,  $\widehat{\t}(\pi_\w(f),g):=\t(f,g)$ for $f,g\in \C^n$ defines a sesquilinear form on $\C^n\backslash \ker(\w)\times \C^n$. Obviously,  $\widehat{\t}$ is bounded by the norm of $\C^n\backslash \ker(\w)\times \C^n$, that is $|\t(f,g)|\leq C \w[f]^\mez \n{g}^\mez$ for all $f,g\in \C^n$. 
		\item[(ii)]  Assume $\t$ is $\w$-left strongly singular. By definition there exist $\s_1,\s_2\geq 0, \s_1\perp \w$ such that \eqref{dis_intro} holds. 		
		Let $l,m,p$ be the dimensions of the subspaces $\ker(\s_1)$, $\ker(\w)$, $\ker(\s_1+\w)=\ker(\s_1)\cap \ker(\w)$, respectively.  By Lemma \ref{H_s_w_sing}, $\H_{\s_1+\w}\simeq\H_{\s_1}\times\H_{\w}$ and this gives the following relation between dimensions
		$$
		n-p=\dim \H_{\s_1+\w}=\dim \H_{\s}+\dim \H_{\w}=2n-l-m,
		$$
		i.e. $n=l+m-p=\dim (\ker(\w)+ \ker(\s_1))\leq \dim (\ker(\w)+ \ker(\t))$. Therefore $\ker(\w)+ \ker(\t)=\C^n$.\\
		On the other hand, assume that $\ker(\w)+ \ker(\t)=\C^n$. Put $P$ the projection on $\C^n$ onto $\ker(\t)^\perp$ and $\s(f,g)=\pin{Pf}{Pg}$ for all $f,g\in \C^n$. Thus, for some $C>0$,
		$$
		|\t(f,g)|=|\t(Pf,g)|\leq C\n{Pf}^\mez \n{g}^\mez =C\s[f]^\mez \n{g}^\mez, \qquad \forall f,g\in \C^n.
		$$
		Now we show that $\s\perp \w$.  We have $\ker(\w)+ \ker(\s)=\C^n$. Looking to the dimensions as before, we conclude that $\H_{\s_1+\w}\simeq\H_{\s_1}\times\H_{\w}$ and $\s$ is $\w$-singular by Lemma \ref{H_s_w_sing}.\qedhere
	\end{enumerate} 
\end{proof}

As said before $\t$ is bounded by the norm of $\C^n$. This means that there exists $C>0$ such that $C\iota\in M_l(\t)$, where $\iota$ is the inner product (a non-negative sesquilinear form in fact) of $\C^n$. Therefore we obtain the following special Lebesgue left decomposition. 

\begin{pro}
	Let $\t,\w$ be forms on $\C^n$ with $\w\geq 0$.  Let $\t=\t_{lr}+\t_{ls}$ be the Lebesgue left decomposition of $\t$ with respect to $\w$ taking $C\iota\in M_l(\t)$ for some $C>0$, according to Theorem \ref{th_Leb}. Then $\t_{lr}, \t_{ls}$ are given by 
	\begin{equation}
	\label{parts_dim_finite}
	\t_{lr}(f,g)=\t(Pf,g), \qquad \t_{ls}(f,g)=\t(I-P)f,g)\qquad \forall f,g \in \C^n,
	\end{equation}
	where $P$ is the projection into $\ker(\w)^\perp$. 
\end{pro}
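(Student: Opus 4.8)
The plan is to run the general construction of Theorem \ref{th_Leb} in the concrete case $\D=\C^n$, with the specific choice $\s=C\iota$ in $M_l(\t)$, and then identify all the abstract objects ($\H_\s$, $P$, $T$, etc.) with their elementary linear-algebra counterparts. First I would note that with $\s=C\iota$ one has $\ker(\s)=\{0\}$, so $\H_\s=\C^n$ and $\pi_\s$ is the identity; thus the operator $P\in\B(\H_\s)$ appearing in the proof of Theorem \ref{th_Leb} is literally the projection of $\C^n$ onto the $\w$-absolutely-continuous part of $\iota$. The key point is that for the inner product $\iota$ the Lebesgue decomposition with respect to $\w$ is $\iota=\iota_a+\iota_s$ with $\iota_a(f,g)=\pin{Qf}{Qg}$ and $\iota_s(f,g)=\pin{(I-Q)f}{(I-Q)g}$, where $Q$ is the orthogonal projection onto $\ker(\w)^\perp$: indeed $\iota_a$ vanishes on $\ker(\w)$ hence is $\w$-absolutely continuous, while $\iota_s$ is supported on $\ker(\w)$ hence $\w$-singular, and the two are mutually orthogonal. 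So $P=Q$ is the projection onto $\ker(\w)^\perp$.

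Next I would unwind the definitions of $\t_{lr}$ and $\t_{ls}$ from that proof. Since $\pi_\s=\mathrm{id}$ and $\t(f,g)=\pin{T f}{\pi_{\s_2}(g)}_{\s_2}$, one gets $\t_{lr}(f,g)=\pin{TPf}{\pi_{\s_2}(g)}_{\s_2}=\t(Pf,g)$ and likewise $\t_{ls}(f,g)=\pin{T(I-P)f}{\pi_{\s_2}(g)}_{\s_2}=\t((I-P)f,g)$, which is exactly \eqref{parts_dim_finite} with $P$ the projection onto $\ker(\w)^\perp$. This is essentially a bookkeeping step once the identification $P=Q$ is in place.

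The one genuine subtlety — and the step I expect to be the main obstacle — is making sure the choice of $\s_2$ (the companion form on the right) does not interfere, and more importantly that the Lebesgue decomposition of $\iota$ with respect to $\w$ really is the one given by $Q$. For the latter I would invoke uniqueness of the Lebesgue decomposition of a non-negative form (which holds in the $M(\t)\neq 0$ setting, and trivially here since $\iota$ dominates nothing problematic), or argue directly: any non-negative $\psi\le\w$ and $\psi\le\iota_s$ must vanish because $\iota_s$ lives on $\ker(\w)$ where $\w$ vanishes, giving $\iota_s\perp\w$; and $\iota_a\ll\w$ because $\ker(\w)\subseteq\ker(\iota_a)$ and $\iota_a$ is closed (being induced by a bounded operator on a finite-dimensional space), so by the standard maximality characterisation $\iota_a$ is the absolutely continuous part. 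Combining, $P$ in Theorem \ref{th_Leb} must equal $Q$, and \eqref{parts_dim_finite} follows. I would keep the write-up short, citing Theorem \ref{th_Leb} for the construction and only spelling out the identification of $P$.

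\begin{proof}
	Apply Theorem \ref{th_Leb} with $\s:=C\iota\in M_l(\t)$, where $C>0$ is such that $C\iota\in M_l(\t)$; let $\s_2\geq 0$ be a companion form with $|\t(f,g)|\leq \s[f]^\mez\s_2[g]^\mez$ for all $f,g\in\C^n$. Since $\ker(\s)=\ker(\iota)=\{0\}$, we have $\H_\s=\C^n$ and $\pi_\s=I$. The Lebesgue decomposition of $\s$ with respect to $\w$ is $\s=\s_a+\s_s$ with
	$$
	\s_a(f,g)=C\pin{Pf}{Pg},\qquad \s_s(f,g)=C\pin{(I-P)f}{(I-P)g},\qquad f,g\in\C^n,
	$$
	where $P$ is the orthogonal projection of $\C^n$ onto $\ker(\w)^\perp$. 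Indeed $\s_s$ vanishes on $\ker(\w)^\perp$ and coincides with $C\iota$ on $\ker(\w)$, so any non-negative form below both $\w$ and $\s_s$ is the null form, i.e.\ $\s_s\perp\w$; on the other hand $\ker(\w)\subseteq\ker(\s_a)$ and $\s_a$ is closed (being induced by a bounded operator on a finite-dimensional space), hence $\s_a\ll\w$, and by uniqueness of the Lebesgue decomposition this is the one. With $\pi_\s=I$, the operator $T:\H_\s\to\H_{\s_2}$ of Theorem \ref{th_Leb} satisfies $\t(f,g)=\pin{Tf}{\pi_{\s_2}(g)}_{\s_2}$, so the parts produced by that theorem are
	$$
	\t_{lr}(f,g)=\pin{TPf}{\pi_{\s_2}(g)}_{\s_2}=\t(Pf,g),\qquad \t_{ls}(f,g)=\pin{T(I-P)f}{\pi_{\s_2}(g)}_{\s_2}=\t((I-P)f,g)
	$$
	for all $f,g\in\C^n$, which is \eqref{parts_dim_finite}.
\end{proof}
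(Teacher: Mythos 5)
Your proposal is correct and follows essentially the same route as the paper: run the construction of Theorem \ref{th_Leb} with $\s=C\iota$, note $\H_\s=\C^n$ and $\pi_\s=I$, identify the Lebesgue decomposition of $\iota$ with respect to $\w$ as the one given by the projection onto $\ker(\w)^\perp$, and unwind the definitions of $\t_{lr}$ and $\t_{ls}$. The only difference is that you justify the identification of $\iota_a$ and $\iota_s$ (which the paper simply asserts), a worthwhile addition.
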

\begin{proof}
We follow the proof of Theorem \ref{th_Leb}. 
Let $\s_2$ be a non-negative form on $\C^n$ such that \eqref{dis_intro} holds with $\s_1=C\iota$. 
The space $\H_{C\iota}$ is essentially $\C^n$. Therefore for some bounded operator $T:\C^n\to \H_{\s_2}$ we have 
\begin{equation*}
\t(f,g)=\pin{Tf}{\pi_{\s_2}(g)}_{\s_2}, \qquad \forall f,g \in \C^n.
\end{equation*}
The Lebesgue decomposition of $\iota$ is simply $\iota_a(f,g)=\pin{Pf}{Pg}$ and $\iota_a(f,g)=\pin{(I-P)f}{(I-P)g}$. 
Thus $\t_{lr}(f,g)=\pin{TPf}{\pi_{\s_2}(g)}_{\s_2}=\t(Pf,g)$ and $\t_{sr}(f,g)=\pin{T(I-P)f}{\pi_{\s_2}(g)}_{\s_2}=\t((I-P)f,g)$ for all $f,g\in \C^n$.
\end{proof}

\begin{exm}
	Here we show that the Lebesgue left decomposition, as constructed in Theorem \ref{th_Leb}, is not unique in general. Indeed, consider the forms $\t,\w,\s_1,\s_2$ on $\C^2$ represented in the usual way by the following matrices with respect to the canonical basis of $\C^2$
	$$
	\begin{pmatrix}
	-1 & 0  \\
	0 & 1  \\
	\end{pmatrix},
	\qquad
	\begin{pmatrix}
	1 & 0  \\
	0 & 0  \\
	\end{pmatrix},
	\qquad	
	\begin{pmatrix}
	1 & 0  \\
	0 & 1  \\
	\end{pmatrix},
	\qquad
	\begin{pmatrix}
	2 & 1  \\
	1 & 2  \\
	\end{pmatrix}.
	$$
	It is easy to see that both $\s_1$ and $\s_2$ belong to $M(\t)$. 
	The Lebesgue left-decomposition of $\t$ with respect to $\w$ and taking $\s_1\in M(\t)$ is $\t=\t_1+\t_2$ where $\t_1,\t_2$ are represented by the following matrices
	$$
	\begin{pmatrix}
	-1 & 0  \\
	0 & 0  \\
	\end{pmatrix},
	\qquad
	\begin{pmatrix}
	0 & 0  \\
	0 & 1  \\
	\end{pmatrix}.
	$$
	On the other hand, the Lebesgue left-decomposition of $\t$ with respect to $\w$ and taking $\s_1\in M(\t)$ is $\t=\t_1+\t_2$ where $\t_1,\t_2$ are represented by the following matrices
	$$
	\begin{pmatrix}
	-1 & -\frac{1}{2}  \\
	0 & 0  \\
	\end{pmatrix},
	\qquad
	\begin{pmatrix}
	0 &  \frac{1}{2} \\
	0 & 1  \\
	\end{pmatrix}.
	$$
	These decompositions can be determined following the proof of Theorem \ref{th_Leb} or imposing the relations between kernels of Proposition \ref{lem_dim_finite}.
\end{exm}

\subsection{Bounded operators}

Let $\H$ be a Hilbert space with inner product $\pint$ and $T,W\in \B(\H)$. We assume that $W$ is non-negative (in symbol $W\geq 0$), i.e. $\pin{Wf}{f}\geq 0$ for all $f\in \H$. \\
Ando \cite{Ando} gave a Lebesgue decomposition for non-negative bounded linear operators, and in \cite{HSdeS} it was shown that this decomposition is actually a particular case of the one for non-negative forms. Inspired by this fact, we give here a Lebesgue-type decomposition of $T$ with respect to $W$, by means of Theorem \ref{th_Leb}. 

First of all, we recall the following definitions from \cite{Ando} and their equivalent formulations by \cite{HSdeS}. Let $S\in \B(\H)$ and $S\geq 0$. Then $S$ is said {\it $W$-absolutely continuous} ($S\ll W$) if one of the following equivalent conditions is satisfied
\begin{enumerate}
	\item[(i)] there exists a sequence of nonnegative operators $\{S_n\}_{n\in \N}\in \B(\H)$ such that $S_m \leq  S_n $ for $n>m$, $S_n\leq c_nW$ for some $c_n>0$ and $\displaystyle \pin{Sf}{f} =\lim_{n \to \infty} \pin{S_nf}{f}$ for all $f\in \H$;
	\item[(ii)]   $\pin{Wf_n}{f_n}\to 0$ and $\pin{S(f_n-f_m)}{(f_n-f_m)}\to 0$ imply $\pin{Sf_n}{f_n}\to 0$.
\end{enumerate}
On the other hand, $S$ is said {\it $W$-singular} ($S\perp W$) if one of the following equivalent conditions holds
\begin{enumerate}
	\item[(i)]  if $Q\in \B(\H)$ and $0\leq Q\leq S$, $0\leq Q\leq W$, then $Q=0$;
	\item[(ii)] for every $f\in \H$ there exists a sequence $\{f_n\}_{n\in \N}\subset \H$ such that $\pin{Wf_n}{f_n}\to 0$ and $\pin{S(f-f_n)}{(f-f_n)}\to 0$.
\end{enumerate}	
Theorem 2 of \cite{Ando} states that for any  non-negative operators $S,W\in \B(\H)$ there exist $S_a,S_s\in \B(\H)$ such that $S_a,S_s\geq 0$, $S_a\ll W, S_s\perp W$ and $S=S_a+S_s$. 
We denote by $M_l(T)$ the set of non-negative operator $S_1\in \B(\H)$ such that there exists a non-negative operator $S_2\in \B(\H)$ such that
\begin{equation}
\label{bound_cond_op}
|\pin{Tf}{g}|\leq \pin{S_1f}{f}^\mez\pin{S_2g}{g}^\mez, \qquad \forall f,g\in \H.
\end{equation}
Note that $M_l(T)$ is never empty because it contains multiplies of the identity. 
We say that $T$ is {\it $W$-left regular} if there exists $S_1\in M_l(T)$ such that $S_1\ll W$, and {\it $W$-left strongly singular} if there exists $S_1\in M_l(T)$ such that $S_1\perp W$.

An operator $T\in \B(\H)$ defines in a natural way a bounded sesquilinear form on $\H$ by $\t(f,g)=\pin{Tf}{g}$ for all $f,g\in \H$. We refer to $\t$ as the sesquilinear form {\it  induced by} $T$. 
For the remark above $M_l(\t)$ is never empty. 
By definition, for $S\in \B(\H)$ with induced form $\s$, we have $S\geq 0$ if and only if $\s\geq 0$. Moreover, $S\ll W$ (resp, $S\perp W$) if and only if $\s\ll \w$ (resp, $\s\perp \w$), and the terms in the Lebesgue decompositions are in correspondence.

We are now ready to give a Lebesgue-type decomposition of an operator $T\in \B(\H)$.

\begin{cor}
	Let $T,W\in \B(\H)$ with $W\geq 0$. Then there exist $T_{lr},T_{ls}\in \B(\H)$ such that $T=T_{lr}+T_{ls}$, $T_{lr}$ is $W$-left regular and $T_{ls}$ is $W$-left strongly singular.
\end{cor}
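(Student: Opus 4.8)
The plan is to deduce the corollary from Theorem~\ref{th_Leb} applied to the sesquilinear form $\t(f,g)=\pin{Tf}{g}$ induced by $T$, the only extra work being to keep all the forms that occur \emph{bounded}, so that each of them comes from a bounded operator.

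Fix $c>0$ with $c\geq\n{T}$ and let $\s$ be the bounded non-negative form $\s(f,g)=c\pin{f}{g}$ induced by $cI$. Then $|\t(f,g)|\leq\n{T}\n{f}\n{g}\leq\s[f]^\mez\s[g]^\mez$, so $\s\in M_l(\t)$ (with $\s_2=\s$) and in particular $M_l(\t)\neq\varnothing$. Running the proof of Theorem~\ref{th_Leb} with the choice $\s=\s_2=c\pint$ we obtain $\t=\t_{lr}+\t_{ls}$, where $\t_{lr}$ is $\w$-left regular and $\t_{ls}$ is $\w$-left strongly singular, and, writing $\s=\s_a+\s_s$ for the classical Lebesgue decomposition of $\s$ with respect to $\w$, the estimates \eqref{bounds_t_r,s} of that proof give
\begin{equation*}
|\t_{lr}(f,g)|\leq\s_a[f]^\mez\s[g]^\mez,\qquad |\t_{ls}(f,g)|\leq\s_s[f]^\mez\s[g]^\mez,\qquad\forall f,g\in\H.
\end{equation*}
Since $0\leq\s_a\leq\s$, $0\leq\s_s\leq\s$ and $\s$ is bounded, the forms $\s_a,\s_s$, and therefore $\t_{lr},\t_{ls}$, are bounded on $\H$; hence there are $T_{lr},T_{ls}\in\B(\H)$ with $\t_{lr}(f,g)=\pin{T_{lr}f}{g}$ and $\t_{ls}(f,g)=\pin{T_{ls}f}{g}$, and $T=T_{lr}+T_{ls}$ because $\t=\t_{lr}+\t_{ls}$.

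It remains to identify the two summands via the correspondence between bounded forms and bounded operators recalled before the statement. The bounded non-negative forms $\s_a$ and $\s_s$ are induced by non-negative operators $S_a,S_s\in\B(\H)$; since $\s_a\ll\w$ and $\s_s\perp\w$ --- being the absolutely continuous part and the singular part of a classical Lebesgue decomposition --- that correspondence yields $S_a\ll W$ and $S_s\perp W$. Finally, the two displayed inequalities say precisely that $S_a\in M_l(T_{lr})$ and $S_s\in M_l(T_{ls})$ (take $S_2=cI$ in \eqref{bound_cond_op}). Therefore $T_{lr}$ is $W$-left regular and $T_{ls}$ is $W$-left strongly singular, which is the claim.

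The only delicate point is the boundedness bookkeeping: the proof of Theorem~\ref{th_Leb} a priori produces sesquilinear forms only, so one must start from $\s_1=cI$ rather than from an arbitrary element of $M_l(\t)$, in order to ensure that $\s$ --- and then $\s_a$, $\s_s$, $\t_{lr}$, $\t_{ls}$, and the witnesses $S_a$, $S_s$ --- are everywhere defined and bounded. Once this is arranged, nothing beyond Theorem~\ref{th_Leb} and the form/operator dictionary is required.
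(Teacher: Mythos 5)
Your proof is correct and follows essentially the same route as the paper: run the construction of Theorem~\ref{th_Leb} on the induced form, use the estimates \eqref{bounds_t_r,s} to see that $\t_{lr},\t_{ls}$ are bounded and hence induced by operators, and read off the witnesses $S_a\ll W$, $S_s\perp W$ from the classical Lebesgue decomposition of $S_1$. The only (harmless) difference is that you specialize to $S_1=S_2=cI$, whereas the paper runs the same argument for an arbitrary pair $S_1,S_2\in\B(\H)$ satisfying \eqref{bound_cond_op} --- boundedness is already built into the operator version of $M_l(T)$, so the restriction to $cI$ is not actually needed.
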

\begin{proof}
	Let $S_1,S_2\in \B(\H),S_1,S_2\geq 0$ satisfy \eqref{bound_cond_op}. Let $\t,\s,\s_2,\w$ be the induced forms of $T,S_1,S_2,W$, respectively. We follow the construction in the proof of Theorem \ref{th_Leb} and obtain a decomposition $\t=\t_{lr}+\t_{ls}$. From \eqref{bounds_t_r,s} it is simple to see that $\t_{lr},\t_{ls}$ are bounded forms on $\H$. Then we can find $T_{lr},T_{ls}\in \B(\H)$ inducing $\t_{lr},\t_{ls}$, respectively, and  $T=T_{lr}+T_{ls}$. Moreover, if $S_1=S_a+S_s$ is the Lebesgue decomposition of $S$ with respect to $W$, \eqref{bounds_t_r,s} says that for all $f,g\in \H$
	\[
	|\pin{T_{lr} f}{g}|\leq \pin{S_af}{f}^\mez \pin{S_2g}{g}^\mez, \qquad 
	|\pin{T_{ls} f}{g}|\leq \pin{S_sf}{f}^\mez \pin{S_2g}{g}^\mez. \qedhere
	\]
\end{proof}

\section*{Acknowledgments}

This work has been supported by the ``Gruppo Nazionale per l'Analisi Matematica, la Probabilità e le loro Applicazioni'' (GNAMPA – INdAM).

\vspace*{0.5cm}
\begin{center}
	\textsc{Rosario Corso, Dipartimento di Matematica e Informatica} \\
	\textsc{Università degli Studi di Palermo, I-90123 Palermo, Italy} \\
	{\it E-mail address}: {\bf rosario.corso@studium.unict.it}
\end{center}

\end{document}